\documentclass[letterpaper, 10 pt, conference]{ieeeconf}  % Comment this line out
\IEEEoverridecommandlockouts                              % This command is only
\usepackage[utf8]{inputenc}
\usepackage[T1]{fontenc}
\usepackage{graphicx}
\graphicspath{ {./images/} }
\usepackage{cite}
\usepackage{amsmath,amssymb,amsfonts,mathtools}
\usepackage{amsmath,amsfonts,amssymb}
\usepackage{epsfig,color} % for postscript graphics files
\usepackage{times} % assumes new font selection scheme 
\usepackage{amssymb}  % assumes amsmath package install
\usepackage{algorithm}
\usepackage{algpseudocode}
\usepackage{amsmath}
\usepackage{amsfonts}
\usepackage{graphicx}
\graphicspath{ {./images/} }
\usepackage{graphicx}
\usepackage{cuted}
\usepackage{graphics} % for pdf, bitmapped graphics files
\usepackage{epsfig,color} % for postscript graphics files
\usepackage{times} % assumes new font selection scheme 
\usepackage{amssymb}  % assumes amsmath package install
\usepackage{algorithm}
\usepackage{algpseudocode}
\usepackage{amsmath}
\usepackage{amsfonts}
\usepackage{graphicx}
\graphicspath{ {./images/} }
\usepackage{graphicx}
\usepackage{cuted}
\usepackage{booktabs} % for professional tables
\usepackage{subfig}

\usepackage{graphicx}
\usepackage{textcomp}
\usepackage{xcolor}
\def\BibTeX{{\rm B\kern-.05em{\sc i\kern-.025em b}\kern-.08em
    T\kern-.1667em\lower.7ex\hbox{E}\kern-.125emX}}

\newtheorem{theorem}{Theorem}
\newtheorem{lemma}{Lemma}
\newtheorem{assumption}{Assumption}
\newtheorem{remark}{Remark}

\newtheorem{proposition}{Proposition}

\title{\LARGE \bf  Persistent Mean Tracking in Client--Server Open Networks}

\author{Amit Dutta, Fat-Hy Omar Rajab, Marcos M. Vasconcelos  and Olugbenga M. Anubi  % <-this % stops a space
\thanks{A. Dutta, F. O. Rajab, M. M. Vasconcelos and O. M. Anubi are with the Department of Electrical and Computer Engineering, FAMU-FSU College of Engineering, Florida State University, Tallahassee, FL 32306, USA. E-mails:
        {\tt  \{adutta, frajab, m.vasconcelos, oanubi\}@fsu.edu}. }%
}

\begin{document}

\maketitle
\thispagestyle{empty}
\pagestyle{empty}

\begin{abstract}
% We study a client-server aggregation problem in open networks comprising two classes of agents: persistent agents whose time-varying mean we wish to track, and transient agents that must be filtered out. The main difficulty is that the server does not know the persistent subset a priori and observes only binary activity indicators together with scalar updates from active agents. To address this, we propose a window-based estimation framework that first identifies the persistent subset from repeated participation activity and then tracks its mean using a smoothing filter. Under a Bernoulli participation condition, we establish finite-window high-probability recovery guarantees for the persistent set and derive the corresponding formula for the minimum number of windows required. We then show that, after recovery, the tracking error contracts to a neighborhood determined by the target drift. We provide numerical simulations to illustrate the effectiveness of our theoretical results. 
We study persistent-agent identification and mean tracking in an open client-server network where agents participate intermittently. The network consists of a core of highly regular persistent agents and a subset of transient/non-persistent agents that appear only sporadically. Since the server observes only binary activity indicators and scalar updates from active agents, naive averaging cannot recover the active persistent mean because transient-agent updates contaminate the aggregate, and heterogeneous participation introduces bias. To address this, we propose a two-phase estimation framework. In Phase I, we develop a two-layer window-based identification procedure: the first layer forms single-window activity decisions, while the second aggregates these decisions across windows to create a high-probability separation between persistent and transient agents. Under heterogeneous Bernoulli participation, we derive explicit finite-window bounds that specify how many windows the server must wait for this separation to occur with high probability. In Phase II, we use the resulting structural estimate to track the active persistent mean via a smoothing filter, and show that the tracking error contracts geometrically to a neighborhood determined by the target drift and the filter gain. We provide numerical simulations to illustrate the effectiveness of our theoretical results.      
\end{abstract}
% \begin{IEEEkeywords}
% component, formatting, style, styling, insert
% \end{IEEEkeywords}
%%%%%%%%%%%%%%%%%%%%%%%%%%%% INTRODUCTION %%%%%%%%%%%%%%%%%%%%%%%%%%%%%%%%%%%%%%%%%%%%%%%
\section{Introduction}
\label{sec:introduction}

% Mean estimation and aggregation are fundamental primitives in networked control, distributed optimization, distributed estimation, and learning. In many modern applications, however, these tasks arise not over a peer-to-peer network, but in a \emph{client--server} architecture, where agents communicate only with a central server and do not exchange information directly among themselves. This setting is natural in server-assisted sensing, edge intelligence, and federated learning \cite{mcmahan2017communication,kairouz2021advances}. In this paper, we study mean estimation in an \emph{open} client--server network, where agent activity varies over time and the server does not know beforehand which agents constitute the stable long-term core of the system.
Mean estimation and aggregation are fundamental tasks in networked control, distributed optimization, estimation, and learning. In many modern applications, however, these tasks arise in a client--server architecture, where agents communicate only with a central server and not directly with one another. This setting is common in server-assisted sensing, edge intelligence, and federated learning \cite{mcmahan2017communication,kairouz2021advances}. In this paper, we study mean estimation in an open client--server network with time-varying agent activity, where the server does not know which agents form the stable long-term core of the system.

Specifically, among the \(N\) agents in \(\mathcal N:=\{1,2,\dots,N\}\), there exists a fixed but unknown subset \(\mathcal X\subset\mathcal N\) with \(|\mathcal X|=n\) whose members remain associated with the server over the full horizon and are active most of the time. We call these agents \emph{persistent}. They form the task-committed backbone of the network, although persistence does not imply uninterrupted availability: even persistent agents may become temporarily inactive due to recharging, communication outages, budget limitations, or temporary reassignment. The remaining \(m:=N-n\) agents are \emph{non-persistent}; for simplicity, we assume that \(m\) is the maximum size of this pool and that the active non-persistent participants vary within it through intermittent arrivals and departures. If \(\mathcal A_t\subseteq\mathcal N\) denotes the set of active agents at time \(t\), then the naive active-network mean is
\begin{align}
\label{eq:intro_naive_mean}
\bar{x}(t)
:=
\frac{1}{|\mathcal A_t|}\sum_{i\in\mathcal A_t}x_i(t).
\end{align}
In an open network, this quantity is generally not the right statistic, because it mixes persistent and non-persistent agents and therefore depends strongly on the transient composition of the visible set. We are instead interested in the persistent mean
\begin{align}
\label{eq:intro_persistent_mean}
x^\star(t)
:=
\frac{1}{|\mathcal X|}\sum_{i\in\mathcal X}x_i(t),
\end{align}
and, more directly, in the mean of the currently active persistent agents
\begin{align}
\label{eq:intro_active_persistent_mean}
x^\star_a(t)
:=
\frac{1}{|\mathcal X\cap\mathcal A_t|}
\sum_{i\in\mathcal X\cap\mathcal A_t}x_i(t).
\end{align}
Our goal is therefore to recover the unknown subset of persistent agents and track the active persistent mean \(x_a^\star(t)\), which is the quantity of interest, rather than the naive average \(\bar x(t)\).

This problem is not captured by standard aggregation models. Classical consensus and distributed averaging typically assume a fixed set of agents, or at least a known participating population, connected through a fixed or time-varying graph \cite{olfati2007consensus,jadbabaie2003coordination,ren2005consensus,nedic2009distributed}. Dynamic average consensus extends this setting to time-varying local signals \cite{kia2019dynamic}, but still presumes that the agents defining the target average are known. Likewise, distributed estimation methods over networked systems, including consensus-based estimators, focus on estimation over a known network structure rather than on recovering a hidden target-defining subset \cite{carron2014asynchronous}. In contrast, our setting is centralized rather than peer-to-peer, and the first challenge is not merely to estimate over a network, but to determine \emph{which agents should define the target mean in the first place}.

To make the distinction concrete, consider a simple simulation with two sets of agents: \(60\) persistent agents drawn from a Gaussian distribution centered at \(\mu_p=5\), and \(40\) non-persistent agents drawn from a different Gaussian distribution centered at \(\mu_{np}=0\). The average over the active persistent agents remains concentrated around the task-relevant persistent component, whereas a naive average over the overall population is shifted toward the mixed-population mean, as shown in Fig.~\ref{fig:intro_motivation}. This simple experiment shows that blind aggregation does not, in general, reflect the quantity of interest, and therefore motivates tracking the active persistent mean rather than an indiscriminate network-wide average.

\begin{figure}[t]
    \centering
    \includegraphics[width=\columnwidth]{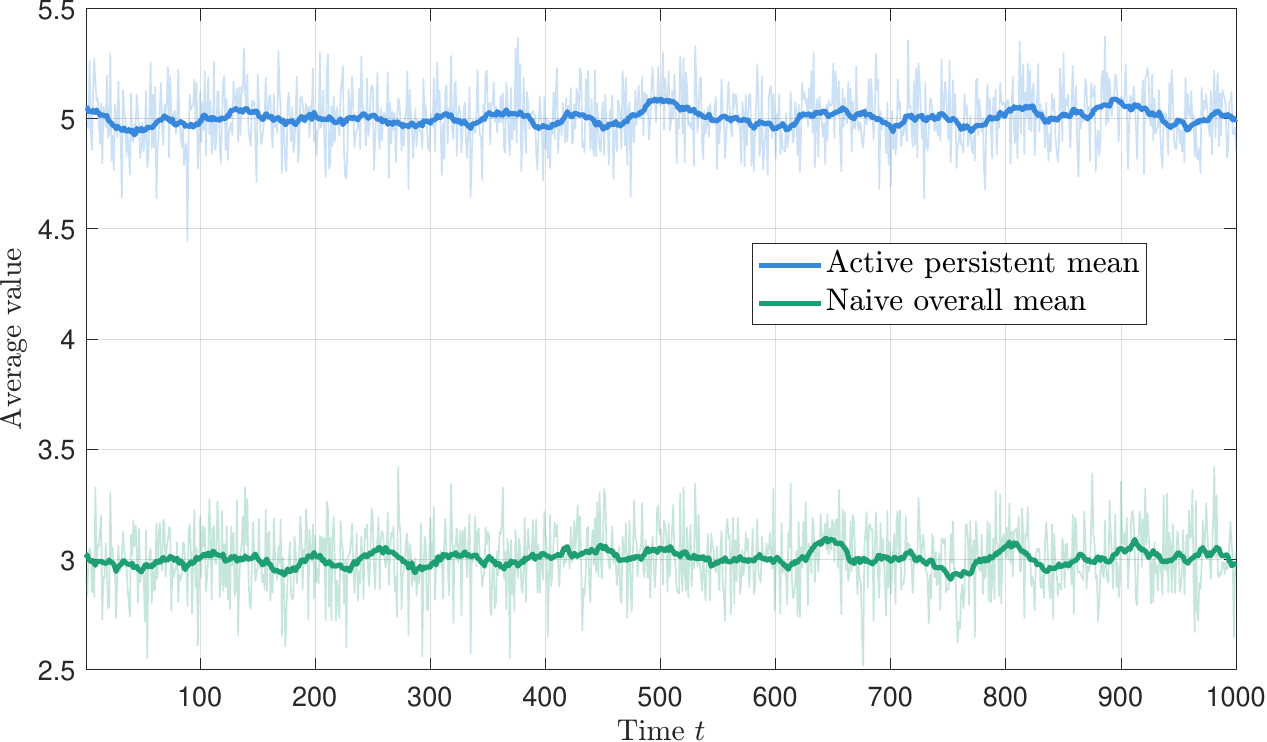}
    \caption{The blue curve shows the empirical average of the active persistent agents, which remains close to the persistent reference level. The green curve shows the naive overall average across the mixed agents, which is pulled toward the mixed-agents reference due to contamination from the non-persistent agents.}
    \label{fig:intro_motivation}
\end{figure}

The problem is related to \textit{open} multi-agent systems, where agents may join, leave, or be replaced over time \cite{hendrickx2017gossip,franceschelli2021stability,hadjicostis2024distributed,varma2018open,xue2022stability}. These works show that \textit{open}-ness fundamentally changes the behavior of consensus and averaging schemes. Related results include max-consensus under arrivals and departures \cite{abdelrahim2017max}, dynamic consensus in open systems \cite{franceschelli2018proportional,franceschelli2021stability}, and performance limitations for average consensus in open multi-agent systems \cite{monnoyer2023fundamental}. More recent works have also studied online optimization and resource allocation in open multi-agent systems \cite{vizuete2022resource} and graphon-based consensus models for open populations \cite{vizuete2025graphon}. However, these frameworks typically average, optimize, or reach agreement over the currently active or evolving population. Our formulation is different: the target-defining subset is fixed but unknown, and must itself be inferred from temporal activity patterns before one can meaningfully track its mean.

Federated learning provides the closest server-based viewpoint, since it also relies on a central coordinator and intermittently participating clients \cite{mcmahan2017communication,kairouz2021advances}. Recent works have shown that partial participation itself induces a distinct source of aggregation error \cite{jhunjhunwala2022fedvarp}, and open federated learning has begun to incorporate client churn explicitly \cite{sun2023openfl}. The same perspective is relevant for future client--server optimization and local-SGD-type methods, where naive aggregation over all active agents would induce a time-varying objective, whereas persistent-only aggregation points toward a stable core problem \cite{jhunjhunwala2022fedvarp,gupta2023byzantine,dutta2025open}. Still, even in these directions, the objective is ordinarily tied to participating clients rather than to an unknown stable core that must first be identified.

% The research gap is therefore not merely that the network is open, but that the \emph{target-defining subset is unknown}. Existing consensus, open-network optimization, and federated aggregation frameworks typically assume that the relevant population is either known or identified by current participation. Here, neither is true. The server does not know the persistent subset a priori, there is no peer-to-peer communication through which agents might self-organize around that subset, and naive aggregation over all visible agents yields a statistic that is generally unrelated to the desired persistent-core mean. This raises the central question of the paper: \emph{can a server recover a hidden persistent subset from repeated activity observations alone, and then use that recovered subset to track the correct task-relevant mean?}

% To address this question, we propose a two-stage framework that explicitly separates \emph{persistent-agent identification} from \emph{mean tracking}. In the first stage, the server uses repeated participation patterns over windows of length \(W\) to build a voting-based estimate of the persistent set. Participation counts are converted into binary window votes, and these votes are aggregated across windows to produce the recovered set
The main challenge is not only that the network is open, but that the target-defining subset is unknown. Existing consensus, open-network optimization, and federated aggregation frameworks typically assume that the relevant population is known or determined by current participation. Here, the server does not know the persistent subset a priori, and naive aggregation over active agents generally fails to recover the desired persistent-core mean. This raises the central question: can a server identify a hidden persistent subset from repeated activity observations alone and then use it to track the correct mean?

% To address this, we propose a two-stage framework separating persistent-agent identification from mean tracking. In the first stage, the server uses repeated participation over windows of length $W$ to construct a voting-based estimate of the persistent set, which it improves using an vote aggregating scheme $C_i(R)$ after Voting $R$ windows and form given threshold $\lamda$, it forms a high-confidence \textit{persitant} set estimate:
To address this, we propose a two-stage framework that separates persistent-agent identification from mean tracking. In the first stage, the server uses repeated participation over windows of length \(W\) to build a voting-based estimate of the persistent set. After \(R\) windows, it aggregates the window-level votes through the score \(C_i(R)\) and, using a threshold \(\lambda\), forms a high-confidence estimate of the \emph{persistent} set:
\begin{align}
\label{eq:intro_Xhat_compact}
\widehat{\mathcal X}(R):=\{\,i\in\mathcal N:\ C_i(R)\ge \lambda\,\}.
\end{align}
In the second stage, the server uses the recovered set to track the active persistent mean by smoothing the values of the currently active agents inside \(\widehat{\mathcal X}(R)\). Thus, unlike peer-to-peer consensus or distributed estimation algorithms \cite{olfati2007consensus,kia2019dynamic,carron2014asynchronous}, the proposed architecture is fully server-based and treats persistent-set recovery itself as part of the estimation problem.

The contributions of the paper are threefold. First, we formulate a mean-estimation problem for open client-server networks with an unknown fixed persistent subset. Second, we develop a window-based persistent-set recovery method together with a server-side tracking recursion for the active persistent mean. Third, we establish high-probability guarantees showing that the server can recover the persistent agents and then track their active mean. In this sense, our work differs from classical consensus and dynamic average consensus \cite{olfati2007consensus,kia2019dynamic} because the target-defining agent set is unknown; from open multi-agent averaging and optimization \cite{hendrickx2017gossip,franceschelli2021stability,monnoyer2023fundamental,vizuete2022resource} because the objective is not induced by the currently active set of agents; and from federated learning under partial participation \cite{mcmahan2017communication,kairouz2021advances,jhunjhunwala2022fedvarp,sun2023openfl} because the central task is not simply to aggregate intermittent client updates, but to identify and track a hidden persistent core. To the best of our knowledge, this persistent-subset mean-estimation problem has not been addressed in the existing literature.

%%%%%%%%%%%%%%%%%%%%%%%%%%%% RELATED WORK %%%%%%%%%%%%%%%%%%%%%%%%%%%%%%%%%%%%%%%%%%%%%%%
% \input{CDC 2026/Literature_new}

%%%%%%%%%%%%%%%%%%%%%%%% PROBLEM FORMULATION %%%%%%%%%%%%%%%%%%%%%%%%%%%%%%%%%%%%%%%%%%%
% \input{CDC 2026/Pr}
\section{Problem Formulation and Tracking Algorithm}
\label{sec:problem_formulation}

We consider a client-server system with agent set $\mathcal N:=\{1,2,\dots,N\}$. Time is discrete, indexed by $t=0,1,2,\dots$. At each time $t$, agent $i\in\mathcal N$ is either active or inactive, as indicated by the binary variable $s_i(t)\in\{0,1\}$. The corresponding active set is
\[
\mathcal A_t:=\{i\in\mathcal N:\ s_i(t)=1\}.
\]

Among the $N$ agents, there exists a fixed but unknown subset $\mathcal X\subset \mathcal N$, with $|\mathcal X|=n$, whose members remain associated with the task over the full horizon and are active most of the time. We refer to these agents as \emph{persistent}. The remaining agents form the complementary set $\mathcal Y:=\mathcal N\setminus \mathcal X$ with $|\mathcal Y|=m=N-n$, and are called \emph{non-persistent}. These agents may appear only sporadically, leave unpredictably, or otherwise participate without sustained regularity.

\paragraph*{Activity model}
For the analysis, we model each activity process $\{s_i(t)\}_{t\ge 0}$ as i.i.d.\ Bernoulli with parameter $p_i\in(0,1]$, where the probabilities $\{p_i\}$ are unknown to the server and may vary across agents. Persistent and non-persistent agents are distinguished statistically through their activity levels: persistent agents are assumed to be uniformly more active than non-persistent agents. Equivalently, there exist constants $0<\bar p_{\mathrm N}<\underline p_{\mathrm P}\le 1$ such that $p_i\ge \underline p_{\mathrm P}$ for $i\in\mathcal X$ and $p_j\le \bar p_{\mathrm N}$ for $j\in\mathcal Y$. This is the basic identifiability condition underlying the persistent-set recovery analysis; importantly, the server does \emph{not} know the individual values of $p_i$.

Each agent $i\in\mathcal N$ is associated with a scalar value $x_i(t)\in\mathbb R$, revealed to the server only when agent $i$ is active. The ideal target is the persistent mean $x^\star(t)$ in \eqref{eq:intro_persistent_mean}, while the directly visible quantity is the active persistent mean $x^\star_a(t)$ in \eqref{eq:intro_active_persistent_mean}. The difficulty is that the server does not know the persistent set $\mathcal X$. Consequently, a blind average over the active set $\mathcal A_t$ generally fails to recover either target, since $\mathcal A_t$ contains both persistent and non-persistent agents and, moreover, persistent agents may have heterogeneous activity rates.

\paragraph*{Window-vote identification}
To infer the latent set $\mathcal X$, the server monitors participation over windows of length $W\in\mathbb N$. For window index $r\in\mathbb N$, define the activity count of agent $i$ by
\begin{equation}
\label{eq:window_count}
K_{i,r}:=\sum_{u=0}^{W-1} s_i\big((r-1)W+u\big).
\end{equation}
Given a count threshold $k^\star\in\{0,1,\dots,W\}$, the corresponding window vote is
\begin{equation}
\label{eq:window_vote}
b_{i,r}:=\mathbf 1\{K_{i,r}\ge k^\star\}.
\end{equation}
Thus, agent $i$ receives a vote in window $r$ whenever it is active often enough within that window. After $R$ completed windows, the cumulative vote score is
\begin{equation}
\label{eq:cumulative_vote}
C_i(R):=\frac{1}{R}\sum_{r=1}^{R} b_{i,r}.
\end{equation}
Using a macro-threshold $\lambda\in(0,1)$, the server declares agent $i$ persistent if $C_i(R)\ge \lambda$, and hence the recovered persistent set is
\begin{equation}
\label{eq:recovered_persistent_set}
\widehat{\mathcal X}(R):=\{\,i\in\mathcal N:\ C_i(R)\ge \lambda\,\}.
\end{equation}

To quantify identification accuracy, let $\chi_i\in\{0,1\}$ denote the true persistence label, with $\chi_i=1$ if $i\in\mathcal X$ and $\chi_i=0$ otherwise, and let $\widehat\chi_i(R):=\mathbf 1\{C_i(R)\ge\lambda\}$. Similarly, we have the vectors $\chi, \hat{\chi}(R) \in \{0,1\}^{|\mathcal{N}|}$ to represent the ground truth and the estimated set of persistent agents, respectively, at any time. We measure the structural recovery error through the Hamming mismatch
\begin{equation}
\label{eq:Hamming_metric}
D(\hat{\chi}(R),\chi)=D(R):=\sum_{i\in\mathcal N}\big|\widehat\chi_i(R)-\chi_i\big|.
\end{equation}
The condition $D(R)=0$ is equivalent to exact recovery of the persistent set, i.e., $\widehat{\mathcal X}(R)=\mathcal X$.

\paragraph*{Tracking layer}
At time $t$, let $R(t):=\Big\lfloor \frac{t}{W}\Big\rfloor$ denote the number of completed windows. Based on the current estimate $\widehat{\mathcal X}(R(t))$, the natural eligible set for tracking is the subset of identified persistent agents that are also active at time $t$ given by $\mathcal E(t) = \{\widehat{\mathcal X}(R(t)) \cap \mathcal{A}_t\}$.
% \[
% \mathcal E(t):=\{\ell\in \widehat{\mathcal X}(R(t)):\ s_\ell(t)=1\}.
% \]
The server forms the active-normalized filtered mean
\begin{equation}
\label{eq:xtilde_active}
\tilde x(t):=
\begin{cases}
\dfrac{1}{|\mathcal E(t)|}\sum_{\ell\in\mathcal E(t)} x_\ell(t), & |\mathcal E(t)|>0,\\[1ex]
\hat x_t, & |\mathcal E(t)|=0,
\end{cases}
\end{equation}
and updates its running estimate through
\begin{equation}
\label{eq:xhat_update}
\hat x_{t+1}=(1-\eta_t)\hat x_t+\eta_t \tilde x(t),
\qquad \eta_t\in(0,1].
\end{equation}
Thus, the server does not wait until the persistent set is perfectly identified before beginning to track. Instead, it starts updating immediately using the current estimate $\widehat{\mathcal X}(R(t))$, allowing early mistakes and improving over time as more windows are observed.
\begin{algorithm}[t]
\caption{Window-Based Persistent Estimator}
\label{alg:window_persistent}
\begin{algorithmic}[1]
\State \textbf{Inputs:} window length \(W\), count threshold \(k^\star\), confidence threshold \(\lambda\), stepsizes \(\{\eta_t\}\)
\State \textbf{Initialize:} \(r\gets 0\), \(\widehat{\mathcal X}(0)\gets \emptyset\), \(C_i(0)\gets 0\) for all \(i\), \(K_i\gets 0\) for all \(i\), initial estimate \(\hat{x}_0\)

\For{$t=0,1,2,\dots$}
    \State Observe the active set \(\mathcal A_t:=\{i:\ s_i(t)=1\}\) and receive \(x_i(t)\) for all \(i\in\mathcal A_t\)

    \State Update the current-window participation counts:
    \[
    K_i \gets K_i + s_i(t), \qquad \forall i\in\mathcal N
    \]

    \If{$(t+1)\bmod W = 0$}
        \State \(r\gets r+1\)
        \For{each agent \(i\in\mathcal N\)}
            \State \(b_{i,r}\gets \mathbf{1}\{K_i\ge k^\star\}\)
            \State \(C_i(r)\gets \dfrac{r-1}{r}C_i(r-1)+\dfrac{1}{r}b_{i,r}\)
        \EndFor
        \State Update the recovered persistent set:
        \[
        \widehat{\mathcal X}(r)\gets \{i:\ C_i(r)\ge \lambda\}
        \]
        \State Reset \(K_i\gets 0\) for all \(i\in\mathcal N\)
    \EndIf

    \State Form the eligible set:
    \[
    \mathcal E(t)\gets \widehat{\mathcal X}(r)\cap \mathcal A_t
    \]

    \If{$|\mathcal E(t)|>0$}
        \State Compute the filtered mean:
        \[
        \tilde{x}(t)\gets \frac{1}{|\mathcal E(t)|}\sum_{i\in\mathcal E(t)} x_i(t)
        \]
    \Else
        \State Set \(\tilde{x}(t)\gets \hat{x}_t\)
    \EndIf

    \State Update the estimate:
    \[
    \hat{x}_{t+1}\gets (1-\eta_t)\hat{x}_t+\eta_t\tilde{x}(t)
    \]
\EndFor
\end{algorithmic}
\end{algorithm}

% \paragraph*{Problem statement}
% The objective of the paper is to design the identification parameters $(W,k^\star,\lambda)$ and the tracking recursion \eqref{eq:xhat_update} so that the server can:
% (i) recover the latent persistent set $\mathcal X$ with high probability from activity observations alone, and
% (ii) use the recovered set to track the active persistent mean $x_a^\star(t)$, and hence the persistent core of the network, with controlled error.

% The analysis therefore has two coupled parts. First, we establish finite-window recovery guarantees showing how the structural error $D(\hat{\chi},\chi)$ decreases as the number of windows $R$ grows. Second, we characterize how the quality of persistent-set recovery propagates into the tracking performance of $\hat x_t$. In particular, we identify a transition time $T_0=R_0W$ such that, after sufficiently many windows, the vectors $\hat{\chi}$ and $\chi$ align and thus, the event $\widehat{\mathcal X}(R)=\mathcal X$ holds with high probability for all $R\ge R_0$, and the tracking layer then operates on the correct stable core.
\paragraph*{Problem statement}
The objective is to design the identification parameters $(W,k^\star,\lambda)$ and the tracking recursion \eqref{eq:xhat_update} so that the server can:
(i) distinguish persistent agents from non-persistent agents with high probability using only activity observations, and
(ii) exploit this structural estimate to track the active persistent mean $x_a^\star(t)$, and hence the persistent core of the network, with controlled error.

Accordingly, the analysis has two coupled parts. First, we derive finite-window guarantees showing how the structural error $D(R)$ decays as the number of windows $R$ increases, and we obtain an explicit waiting time on the number of windows needed for high-probability separation. Second, we show how this separation accuracy propagates into the performance of the tracking recursion $\hat x_t$. In particular, we identify a transition time $T_0=R_0W$ beyond which the structural estimate is sufficiently accurate with high probability for the tracking layer to reliably follow the active persistent mean.

Thus, Phase I studies high-probability separation of persistent and non-persistent agents from activity observations, while Phase II studies how this separation feeds into dynamic tracking of \(x_a^\star(t)\).

\section{Phase 1: Multi-Layer Structural Identification}
\label{sec:identification}

Recall that due to the transient nature of the non-persistent agents, direct estimation of $x^{\star}(t)$ is not possible; instead, the server aims to compute $x^{\star}_{a}(t)$, which in turn requires prior identification of the persistent set $\mathcal{X}$. As the participation probabilities $p_i$ are unknown to the server, this identification must be inferred from the temporal activity patterns of the agents. 

We propose a two-layer strategy for identifying the persistent-agent set: Layer I performs a window-level coarse estimate, and Layer II refines it into a high-confidence estimate.

\subsection{Layer 1: The Micro-Level (Single Window)}

The server partitions discrete time into non-overlapping observation windows of length
\(W\in\mathbb N\). Define the window boundary times $t_r:=rW$ for $r=0,1,2,\dots,$
 and the \(r\)-th window $\mathcal W_r:=\{t_r,\dots,t_{r+1}-1\}.$

\paragraph{Server activity score model.}
For each agent \(\ell\), the server maintains the activity score
\begin{equation}
\label{eq:ewma_half}
a_\ell(t+1)=\frac12 a_\ell(t)+\frac12 s_\ell(t),
\qquad
a_\ell(0)\in[0,1].
\end{equation}
Unrolling \eqref{eq:ewma_half} gives
\begin{equation}
\label{eq:ewma_closed}
a_\ell(t)=2^{-t}a_\ell(0)+\sum_{j=0}^{t-1}2^{-(t-j)}s_\ell(j).
\end{equation}
Evaluating \eqref{eq:ewma_closed} at successive window boundaries yields
\begin{equation}
\label{eq:window_decomp}
a_\ell(t_{r+1})=2^{-W}a_\ell(t_r)+\psi_{\ell,r},
\end{equation}
where the current-window weighted contribution is
\begin{equation}
\label{eq:psi_def}
\psi_{\ell,r}:=
2^{-W}\sum_{u=0}^{W-1}2^u\,s_\ell(t_r+u).
\end{equation}
Here \(u\in\{0,\dots,W-1\}\) is the local index within window \(r\). For each agent, the server also maintains the unweighted activity count within the
current window:
\begin{equation}
\label{eq:count_def}
K_{\ell,r}
=
\sum_{t\in\mathcal W_r}s_\ell(t)
=
\sum_{u=0}^{W-1}s_\ell(t_r+u).
\end{equation}
Note that, the Bernoulli activity model, \(K_{\ell,r}\sim\mathrm{Binomial}(W,p_\ell)\). The server next applies an integer threshold \(k^\star\in\{1,\dots,W\}\) to these counts and defines the normalized threshold ratio \(\theta:=k^\star/W\). This gives the server an initial window-level rule for deciding which agents to vote as persistent.

\begin{proposition}[Activity score-to-count sufficient condition]
\label{prop:count_sufficient}
Let \(\tau\in(0,1-2^{-W}]\) and define
\begin{equation}
\label{eq:kstar_def}
k^\star:=
\left\lceil \log_2(\tau 2^W+1)\right\rceil.
\end{equation}
If \(K_{\ell,r}\ge k^\star\), then \(\psi_{\ell,r}\ge \tau\).
\end{proposition}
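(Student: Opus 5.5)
The plan is a worst-case (extremal) argument. The weights \(2^{u-W}\) in \eqref{eq:psi_def} increase with the local index \(u\), so among all activity patterns in window \(r\) with exactly \(K_{\ell,r}=k\) active slots, \(\psi_{\ell,r}\) is smallest when those slots are the \emph{earliest} ones, \(u=0,\dots,k-1\). I will therefore first establish a lower bound on \(\psi_{\ell,r}\) that depends only on \(K_{\ell,r}\). Then I will show that the choice of \(k^\star\) in \eqref{eq:kstar_def} makes this lower bound at least \(\tau\).

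\textbf{Step 1 (rearrangement).} Let \(S=\{u: s_\ell(t_r+u)=1\}\) with \(|S|=k\). Write its elements in increasing order as \(u_1<\dots<u_k\). Since these are distinct nonnegative integers, \(u_j\ge j-1\) for every \(j\). Because \(2^u\) is increasing in \(u\),
\[
\psi_{\ell,r}=2^{-W}\sum_{j=1}^k 2^{u_j}\ \ge\ 2^{-W}\sum_{j=0}^{k-1}2^{j}=2^{-W}(2^k-1).
\]
This lower bound is attained exactly when the \(k\) active slots are at the start of the window.

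\textbf{Step 2 (monotonicity).} The function \(k\mapsto 2^{-W}(2^k-1)\) is increasing. Hence \(K_{\ell,r}\ge k^\star\) gives \(\psi_{\ell,r}\ge 2^{-W}(2^{k^\star}-1)\).

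\textbf{Step 3 (threshold algebra).} By the ceiling in \eqref{eq:kstar_def}, \(k^\star\ge \log_2(\tau 2^W+1)\). Therefore \(2^{k^\star}\ge \tau2^W+1\), which rearranges to \(2^{-W}(2^{k^\star}-1)\ge\tau\). Combining this with Step 2 proves the claim.

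\textbf{Admissibility of \(k^\star\).} I will also verify that \(k^\star\) is a valid count threshold, that is, \(k^\star\in\{1,\dots,W\}\).
\begin{itemize}
\item The condition \(\tau\le 1-2^{-W}\) gives \(\tau2^W+1\le 2^W\), so \(k^\star\le W\).
\item The condition \(\tau>0\) gives \(\log_2(\tau2^W+1)>0\), so \(k^\star\ge1\).
\end{itemize}
This is where the stated range of \(\tau\) is used. Without it, the event \(K_{\ell,r}\ge k^\star\) could be impossible, and the implication would hold only vacuously.

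The only step with real content is Step 1. I would write it as the index inequality \(u_j\ge j-1\) rather than as a general rearrangement-inequality appeal. The rest is elementary manipulation.
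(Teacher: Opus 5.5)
Your proof is correct and follows the same extremal argument as the paper: with $k$ activations, the weighted sum is smallest when they occupy the earliest slots, so $\psi_{\ell,r}\ge 2^{-W}(2^{k}-1)$, and the ceiling in the definition of $k^\star$ makes this bound at least $\tau$. Your index inequality $u_j\ge j-1$, the explicit monotonicity step in $k$, and the check that $k^\star\in\{1,\dots,W\}$ make rigorous, or add to, steps the paper states informally.
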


\begin{proof}
Fix a window \(r\) and an agent \(\ell\). Let
\[
A_{\ell,r}:=
\{u\in\{0,\dots,W-1\}: s_\ell(t_r+u)=1\},
\]
and let \(|A_{\ell,r}|=K_{\ell,r}=k\). From \eqref{eq:psi_def},
\[
\psi_{\ell,r}=2^{-W}\sum_{u\in A_{\ell,r}}2^u.
\]
For fixed \(k\), the right-hand side is minimized when the active indices are the
smallest \(k\) values, namely \(u=0,\dots,k-1\). Hence
\[
\sum_{u\in A_{\ell,r}}2^u
\ge
\sum_{u=0}^{k-1}2^u
=
2^k-1,
\]
so
\[
\psi_{\ell,r}\ge 2^{-W}(2^k-1).
\]
Thus \(\psi_{\ell,r}\ge\tau\) is guaranteed whenever $2^{-W}(2^k-1)\ge\tau,$
that is, whenever $k\ge \log_2(\tau 2^W+1).$ Therefore \(K_{\ell,r}\ge k^\star\) implies \(\psi_{\ell,r}\ge\tau\).
\end{proof}
\begin{remark}[Interpretation of \(k^\star/W\)]
Proposition~\ref{prop:count_sufficient} is stated for the score update
\[
a_\ell(t+1)=(1-\beta)a_\ell(t)+\beta s_\ell(t)
\quad\text{with}\quad \beta=\tfrac12,
\]
for which more recent activations receive larger weights than older ones. The threshold
\(k^\star\) is therefore a worst-case sufficient count: \(K_{\ell,r}\ge k^\star\)
guarantees \(\psi_{\ell,r}\ge\tau\) independently of the placement of activations
within the window, whereas more recent activations may require fewer than \(k^\star\).
Accordingly, the ratio \(\theta:=k^\star/W\) can be interpreted as a conservative
minimum window-level activation rate required for deterministic flagging within a
single window.

Since \(K_{\ell,r}/W\) is the empirical participation rate in window \(r\) with mean \(p_\ell\), the threshold \(\theta=k^\star/W\) serves as a decision boundary separating persistent and non-persistent agents at the level of their Bernoulli activity probabilities.
\end{remark}

The server next assigns a binary persistence vote to agent \(i\) in window \(r\) according to
\begin{align}
\label{eq:window_vote}
b_{i,r}=\mathbf{1}\{K_{i,r}\ge k^\star\}.
\end{align}
Thus, for the window-level classifier, the relevant derived quantity is the vote probability
\(\Pr(b_{i,r}=1)\), which is induced by the underlying participation probability \(p_i\). The following result formalizes the initial statistical separation induced by a single window: agents whose Bernoulli participation probabilities lie sufficiently above or below the threshold \(\theta\) receive opposite vote decisions with high probability.

\begin{lemma}[One-window vote separation]
\label{lem:one_window_separation}
Let \(\theta:=k^\star/W\) and fix \(\delta\in(0,1/2)\). Define
\begin{equation}
\label{eq:rho_bounds_compact}
\rho_{\mathrm P}:=
\theta+\sqrt{\frac{1}{2W}\log\frac{1}{\delta}},
\qquad
\rho_{\mathrm N}:=
\theta-\sqrt{\frac{1}{2W}\log\frac{1}{\delta}}.
\end{equation}
Assume \(0\le \rho_{\mathrm N}<\theta<\rho_{\mathrm P}\le 1\). If $p_i\ge \rho_{\mathrm P}\quad (i\in\mathcal X)$ and $p_j\le \rho_{\mathrm N}\quad (j\in\mathcal Y)$, then
\begin{equation}
\label{eq:one_window_vote_sep_compact}
\Pr(b_{i,r}=1)\ge 1-\delta,
\qquad
\Pr(b_{j,r}=1)\le \delta.
\end{equation}
For later use, define
\begin{equation}
\label{eq:piPN_def}
\pi_{\mathrm P}:=
1-e^{-2W(\rho_{\mathrm P}-\theta)^2},
\qquad
\pi_{\mathrm N}:=
e^{-2W(\theta-\rho_{\mathrm N})^2},
\end{equation}
so that
\[
\mathbb E[b_{i,r}] \ge \pi_{\mathrm P}\quad (i\in\mathcal X),
\qquad
\mathbb E[b_{j,r}] \le \pi_{\mathrm N}\quad (j\in\mathcal Y).
\]
\end{lemma}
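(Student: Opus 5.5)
The plan is to apply Hoeffding's inequality to the window count \(K_{i,r}\sim\mathrm{Binomial}(W,p_i)\). It is a sum of \(W\) independent \([0,1]\)-valued terms \(s_i(t_r+u)\) with mean \(Wp_i\). The vote event \(\{b_{i,r}=1\}\) equals \(\{K_{i,r}/W\ge\theta\}\) because \(\theta=k^\star/W\). So each claim becomes a one-sided deviation bound for the empirical rate \(K_{i,r}/W\) around \(p_i\), with deviation gap \(p_i-\theta\) or \(\theta-p_j\).

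\textbf{Persistent agents.} Take \(i\in\mathcal X\) with \(p_i\ge\rho_{\mathrm P}>\theta\). Then
\[
\Pr(b_{i,r}=0)=\Pr\!\left(\tfrac{K_{i,r}}{W}<\theta\right)\le \Pr\!\left(\tfrac{K_{i,r}}{W}-p_i\le -(p_i-\theta)\right)\le e^{-2W(p_i-\theta)^2}.
\]
Since \(p_i-\theta\ge\rho_{\mathrm P}-\theta>0\), this is at most \(e^{-2W(\rho_{\mathrm P}-\theta)^2}\). The definition of \(\rho_{\mathrm P}\) gives \((\rho_{\mathrm P}-\theta)^2=\frac{1}{2W}\log\frac1\delta\), so the bound equals exactly \(\delta\). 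Hence \(\Pr(b_{i,r}=1)\ge 1-\delta\). The intermediate inequality \(\Pr(b_{i,r}=1)\ge 1-e^{-2W(\rho_{\mathrm P}-\theta)^2}=\pi_{\mathrm P}\) is the stated expectation bound, because \(\mathbb E[b_{i,r}]=\Pr(b_{i,r}=1)\).

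\textbf{Non-persistent agents.} Take \(j\in\mathcal Y\) with \(p_j\le\rho_{\mathrm N}<\theta\). Then
\[
\Pr(b_{j,r}=1)=\Pr\!\left(\tfrac{K_{j,r}}{W}\ge\theta\right)\le\Pr\!\left(\tfrac{K_{j,r}}{W}-p_j\ge \theta-p_j\right)\le e^{-2W(\theta-p_j)^2}\le e^{-2W(\theta-\rho_{\mathrm N})^2}=\pi_{\mathrm N}=\delta.
\]
This gives the second half of \eqref{eq:one_window_vote_sep_compact} and \(\mathbb E[b_{j,r}]\le\pi_{\mathrm N}\).

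\textbf{Points to check.} There is no real obstacle here; only two things need care. First, Hoeffding must be applied with the strict versus non-strict inequality in the right direction. The bound \(\Pr(\bar X-p\le -\epsilon)\le e^{-2W\epsilon^2}\) for \(\epsilon>0\) covers the strict event \(\{\bar X<\theta\}\) a fortiori. Second, the exponent must be monotone in the gap. This needs \(p_i-\theta\ge\rho_{\mathrm P}-\theta>0\) and \(\theta-p_j\ge\theta-\rho_{\mathrm N}>0\), which the standing assumption \(0\le\rho_{\mathrm N}<\theta<\rho_{\mathrm P}\le1\) guarantees. Independence of the \(s_i(t)\) within a window comes from the i.i.d. Bernoulli activity model, so Hoeffding applies directly.
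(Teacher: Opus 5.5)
Your proposal is correct and matches the paper's proof essentially step for step: both reduce $\{b_{i,r}=1\}$ to $\{K_{i,r}/W\ge\theta\}$, apply the one-sided Hoeffding bound using the gaps $p_i-\theta\ge\rho_{\mathrm P}-\theta>0$ and $\theta-p_j\ge\theta-\rho_{\mathrm N}>0$, and obtain the expectation bounds from $\mathbb E[b_{i,r}]=\Pr(b_{i,r}=1)$. You also state explicitly that $e^{-2W(\rho_{\mathrm P}-\theta)^2}=e^{-2W(\theta-\rho_{\mathrm N})^2}=\delta$, so that $\pi_{\mathrm P}=1-\delta$ and $\pi_{\mathrm N}=\delta$, which is the one step the paper leaves implicit.
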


\begin{proof}
Recall that $$K_{i,r}=\sum_{u=0}^{W-1}s_i(t_r+u),$$ with \(\{s_i(t_r+u)\}_{u=0}^{W-1}\) are i.i.d.\ Bernoulli\((p_i)\). Hence $K_{i,r}\sim \mathrm{Binomial}(W,p_i),$ and $\mathbb E\!\left[K_{i,r}/W\right]=p_i.$
Moreover, by definition of the vote rule, $b_{i,r}=1$ implies $K_{i,r}/W\ge \theta$. Thus the vote event is determined by whether the empirical participation rate in the
window crosses the threshold \(\theta\).

Consider first a persistent agent \(i\in\mathcal X\). Then
\[
\Pr(b_{i,r}=1)
=
\Pr\!\left(\frac{K_{i,r}}{W}\ge \theta\right)
=
1-\Pr\!\left(\frac{K_{i,r}}{W}<\theta\right).
\]
If \(p_i\ge \rho_{\mathrm P}>\theta\), then
\[
\Pr\!\left(\frac{K_{i,r}}{W}<\theta\right)
=
\Pr\!\left(\frac{K_{i,r}}{W}-p_i<-(p_i-\theta)\right).
\]
Since \(K_{i,r}/W\) is the empirical mean of \(W\) i.i.d.\ Bernoulli trials, Hoeffding's
inequality applies and gives
\begin{align*}
    \Pr\!\left(\frac{K_{i,r}}{W}-p_i<-(p_i-\theta)\right)
\le
e^{-2W(p_i-\theta)^2}
&\le
e^{-2W(\rho_{\mathrm P}-\theta)^2}.
\end{align*}
Therefore, $\Pr(b_{i,r}=1)\ge 1-\delta.$ Now consider a non-persistent agent \(j\in\mathcal Y\). Then
\[
\Pr(b_{j,r}=1)
=
\Pr\!\left(\frac{K_{j,r}}{W}\ge \theta\right)
=
\Pr\!\left(\frac{K_{j,r}}{W}-p_j\ge \theta-p_j\right).
\]
If \(p_j\le \rho_{\mathrm N}<\theta\), another application of Hoeffding's inequality yields
\[
\Pr\!\left(\frac{K_{j,r}}{W}-p_j\ge \theta-p_j\right)
\le
e^{-2W(\theta-p_j)^2}
\le
e^{-2W(\theta-\rho_{\mathrm N})^2}.
\]
Hence, $\Pr(b_{j,r}=1)\le \delta.$ Finally, since \(b_{i,r}\) and \(b_{j,r}\) are Bernoulli random variables,
\[
\mathbb E[b_{i,r}]=\Pr(b_{i,r}=1),
\qquad
\mathbb E[b_{j,r}]=\Pr(b_{j,r}=1),
\]
which gives the stated bounds involving \(\pi_{\mathrm P}\) and \(\pi_{\mathrm N}\).
\end{proof}
\begin{remark}[Interpretation and tradeoff]
The lemma shows that a single window already provides a probabilistic separation between
persistent and non-persistent agents. In particular, it yields the conservative sufficient
participation-probability gap
\[
\rho_{\mathrm P}-\rho_{\mathrm N}
=
2\sqrt{\frac{1}{2W}\log\frac{1}{\delta}},
\]
under which persistent and non-persistent agents receive opposite vote decisions with high
probability. Thus, increasing the window length \(W\) improves the statistical
separability of the two classes at the single-window level. However, a larger window also delays the server's decision, since the vote \(b_{i,r}\)
can only be formed after the entire window has been observed. Relying on a very long
window is therefore undesirable in a dynamic open network, where the server must begin
forming decisions before long observation horizons have elapsed.
\end{remark}

For this reason, the server does not wait for a single very long window to make a final
classification. Instead, it uses a finite window length \(W\) to form provisional
window-level decisions and then aggregates these decisions across multiple windows. In
this way, the server is allowed to make imperfect decisions initially and improve them
progressively as more windows are observed. The role of the second layer is precisely to
amplify the one-window vote gap over repeated observations and thereby produce a
high-confidence estimate of the persistent set after a finite number of windows.

\subsection{Layer 2: The Macro-Level (Multi-Window)}

In the second layer, the server aggregates the window votes over \(R\) observation windows:
\begin{align}
\label{eq:Cumilitive_window_vote}
C_i(R)=\frac{1}{R}\sum_{r=1}^R b_{i,r}.
\end{align}
Because non-overlapping windows are independent under the activity model, the votes
\(\{b_{i,r}\}_{r=1}^R\) are independent Bernoulli random variables whose expectations are
separated as in Lemma~\ref{lem:one_window_separation}. The server then applies a
macro-threshold \(\lambda\) and declares
\begin{equation}
\label{eq:Xhat_macro}
\widehat{\mathcal X}(R)
=
\{\,i\in\mathcal N \mid C_i(R)\ge \lambda\,\}.
\end{equation}
The associated indicator is
\begin{align}
\label{eq:chat_def}
\widehat{\chi}_i(R)=\mathbf{1}\{C_i(R)\ge \lambda\}.
\end{align}
Thus, the across-window classifier amplifies the one-window vote separation over repeated
observations.

\paragraph{Hamming recovery metric.}
Let the true persistence indicator be
\[
\chi_\ell=
\begin{cases}
1,& \ell\in\mathcal X,\\
0,& \ell\in\mathcal Y,
\end{cases}
\]
and let \(\widehat{\chi}_\ell(R)\) be defined as in \eqref{eq:chat_def}. We define the
Hamming identification error by
\begin{equation}
\label{eq:hamming_error}
D(R):=\sum_{\ell\in\mathcal N}\bigl|\widehat{\chi}_\ell(R)-\chi_\ell\bigr|.
\end{equation}
Thus, \(D(R)\) is exactly the number of misclassified agents after \(R\) windows. Note that since \(D(R)=0\) if and only if \(\widehat{\chi}_i(R)=\chi_i\) for all \(i\in\mathcal N\),
the event \(D(R)=0\) is equivalent to exact recovery of the persistent set. Next, we present our main result for Layer-II for estimating this persistent set with high probability after a finite number of observed windows.

\begin{theorem}[Finite-window recovery and sample complexity]
\label{thm:finite_window_recovery}
Suppose the conditions of Lemma~\ref{lem:one_window_separation} hold. Let $\lambda\in(\pi_{\mathrm N},\pi_{\mathrm P}),$ define the classification margins
\[
\Delta_{\mathrm P}:=\pi_{\mathrm P}-\lambda>0,
\qquad
\Delta_{\mathrm N}:=\lambda-\pi_{\mathrm N}>0,
\]
and let
\[
\Delta:=\min\{\Delta_{\mathrm P},\Delta_{\mathrm N}\}.
\]
Then the following statements hold for every \(R\ge 1\):

\begin{enumerate}
    \item The expected Hamming recovery error satisfies
    \begin{equation}
    \label{eq:EDR_bound_main}
    \mathbb E[D(R)]
    \le
    |\mathcal X|e^{-2R\Delta_{\mathrm P}^2}
    +
    |\mathcal Y|e^{-2R\Delta_{\mathrm N}^2}
    \le
    |\mathcal N|e^{-2R\Delta^2}.
    \end{equation}

    \item The exact recovery probability satisfies
    \begin{equation}
    \label{eq:exact_recovery_prob_main}
    \Pr\bigl(D(R)=0\bigr)
    \ge
    1-|\mathcal N|e^{-2R\Delta^2}.
    \end{equation}

    \item For any prescribed confidence level \(\rho\in(0,1)\), if
    \begin{equation}
    \label{eq:R0_recovery_main}
    R
    \ge
    R_0
    :=
    \frac{1}{2\Delta^2}
    \log\!\left(\frac{|\mathcal N|}{\rho}\right),
    \end{equation}
    then
    \begin{equation}
    \label{eq:one_minus_rho_recovery}
     \Pr\bigl(D(R)=0\bigr)\ge 1-\rho.
    \end{equation}
\end{enumerate}
\end{theorem}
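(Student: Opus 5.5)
The proof is a per-agent Hoeffding bound on the cumulative vote score $C_i(R)$, followed by linearity of expectation and a union bound. For each agent $i\in\mathcal N$, the votes $\{b_{i,r}\}_{r=1}^R$ are i.i.d. Bernoulli random variables, because the windows are disjoint and the activity process is i.i.d. By Lemma~\ref{lem:one_window_separation}, their mean $q_i:=\mathbb E[b_{i,r}]$ satisfies $q_i\ge\pi_{\mathrm P}$ for $i\in\mathcal X$ and $q_j\le\pi_{\mathrm N}$ for $j\in\mathcal Y$. The variable $C_i(R)$ is the empirical mean of $R$ i.i.d. $[0,1]$-valued variables, so Hoeffding's inequality applies at the macro level exactly as it did within a window.

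\textbf{Step 1 (per-agent misclassification).} For $i\in\mathcal X$, a mistake means $C_i(R)<\lambda$. This is the event $C_i(R)-q_i<-(q_i-\lambda)$, and since $q_i\ge\pi_{\mathrm P}$ it implies $C_i(R)-q_i<-\Delta_{\mathrm P}$. Hoeffding's inequality then gives
\[
\Pr\bigl(\widehat\chi_i(R)=0\bigr)\le e^{-2R(q_i-\lambda)^2}\le e^{-2R\Delta_{\mathrm P}^2}.
\]
For $j\in\mathcal Y$, a mistake means $C_j(R)\ge\lambda$, which implies $C_j(R)-q_j\ge\lambda-\pi_{\mathrm N}=\Delta_{\mathrm N}$. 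This gives $\Pr(\widehat\chi_j(R)=1)\le e^{-2R\Delta_{\mathrm N}^2}$. Both margins are strictly positive by the choice $\lambda\in(\pi_{\mathrm N},\pi_{\mathrm P})$. This positivity is the only place the hypothesis on $\lambda$ is used, and it is also where the slightly mislabelled "$\mathbb E[b]\ge\pi_{\mathrm P}$" from the lemma is invoked. The lemma's proof does establish that bound, since $\Pr(b_{i,r}=0)\le e^{-2W(p_i-\theta)^2}\le e^{-2W(\rho_{\mathrm P}-\theta)^2}$.

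\textbf{Step 2 (expected Hamming error).} Write $D(R)=\sum_{\ell}|\widehat\chi_\ell(R)-\chi_\ell|$ as a sum of misclassification indicators. By linearity of expectation, $\mathbb E[D(R)]$ is the sum of the individual misclassification probabilities. Inserting the Step 1 bounds yields $|\mathcal X|e^{-2R\Delta_{\mathrm P}^2}+|\mathcal Y|e^{-2R\Delta_{\mathrm N}^2}$. Bounding both exponentials by $e^{-2R\Delta^2}$ and using $|\mathcal X|+|\mathcal Y|=|\mathcal N|$ gives \eqref{eq:EDR_bound_main}. No independence across agents is needed here.

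\textbf{Step 3 (exact recovery and sample complexity).} The event $\{D(R)\neq0\}$ is the union of the per-agent misclassification events. A union bound, or equivalently Markov's inequality $\Pr(D(R)\ge1)\le\mathbb E[D(R)]$ since $D$ is integer valued, gives \eqref{eq:exact_recovery_prob_main}. For part 3, the condition $|\mathcal N|e^{-2R\Delta^2}\le\rho$ is equivalent to $R\ge\frac{1}{2\Delta^2}\log(|\mathcal N|/\rho)=R_0$. The function $e^{-2R\Delta^2}$ is decreasing in $R$, so every $R\ge R_0$ satisfies it.

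The main point requiring care is Step 1, specifically verifying that Hoeffding applies to $C_i(R)$. This needs the within-agent independence of votes across windows, which follows from the i.i.d. activity model because disjoint windows involve disjoint sets of Bernoulli trials. It also needs that the inequality is applied around the true mean $q_i$ and then relaxed via $q_i-\lambda\ge\Delta_{\mathrm P}$ (respectively $\lambda-q_j\ge\Delta_{\mathrm N}$), rather than around $\pi_{\mathrm P}$ directly. Everything else is linearity of expectation and a union bound.
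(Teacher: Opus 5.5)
Your proposal is correct and follows the paper's proof essentially step for step. The common structure is: Hoeffding applied to $C_i(R)$ around the true vote mean and relaxed through the margins $\Delta_{\mathrm P},\Delta_{\mathrm N}$; linearity of expectation for $\mathbb E[D(R)]$; Markov's inequality, or equivalently a union bound, for exact recovery; and inversion of $|\mathcal N|e^{-2R\Delta^2}\le\rho$ to obtain $R_0$. Nothing is mislabelled in the lemma's bound $\mathbb E[b_{i,r}]\ge\pi_{\mathrm P}$, since with the given $\rho_{\mathrm P},\rho_{\mathrm N}$ one simply has $\pi_{\mathrm P}=1-\delta$ and $\pi_{\mathrm N}=\delta$.
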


\begin{proof}
We begin with the expected Hamming error. 
% By definition,
% \[
% D(R)=\sum_{i\in\mathcal N}\bigl|\widehat{\chi}_i(R)-\chi_i\bigr|.
% \]
For each persistent agent \(i\in\mathcal X\), a misclassification occurs when
\(C_i(R)<\lambda\), whereas for each non-persistent agent \(j\in\mathcal Y\), a
misclassification occurs when \(C_j(R)\ge \lambda\). Hence
\begin{equation}
\label{eq:EDR_split_main}
\mathbb E[D(R)]
=
\sum_{i\in\mathcal X}\Pr\!\big(C_i(R)<\lambda\big)
+
\sum_{j\in\mathcal Y}\Pr\!\big(C_j(R)\ge \lambda\big).
\end{equation}

Now fix \(i\in\mathcal X\). Since the votes \(\{b_{i,r}\}_{r=1}^R\) are i.i.d.\ Bernoulli
and Lemma~\ref{lem:one_window_separation} gives $\mu_i:=\mathbb E[b_{i,r}] \ge \pi_{\mathrm P},$
we have
\[
\Pr\!\big(C_i(R)<\lambda\big)
=
\Pr\!\big(C_i(R)-\mu_i < \lambda-\mu_i\big).
\]
Because \(\mu_i-\lambda\ge \pi_{\mathrm P}-\lambda=\Delta_{\mathrm P}\), Hoeffding's
inequality implies
\begin{equation}
\label{eq:FN_hamming_main}
\Pr\!\big(C_i(R)<\lambda\big)\le e^{-2R\Delta_{\mathrm P}^2}.
\end{equation}

Similarly, for \(j\in\mathcal Y\), Lemma~\ref{lem:one_window_separation} gives $\mu_j:=\mathbb E[b_{j,r}] \le \pi_{\mathrm N}.$ Thus
\[
\Pr\!\big(C_j(R)\ge \lambda\big)
=
\Pr\!\big(C_j(R)-\mu_j \ge \lambda-\mu_j\big),
\]
and since \(\lambda-\mu_j\ge \lambda-\pi_{\mathrm N}=\Delta_{\mathrm N}\), another
application of Hoeffding's inequality gives
\begin{equation}
\label{eq:FP_hamming_main}
\Pr\!\big(C_j(R)\ge \lambda\big)\le e^{-2R\Delta_{\mathrm N}^2}.
\end{equation}

Substituting \eqref{eq:FN_hamming_main} and \eqref{eq:FP_hamming_main} into
\eqref{eq:EDR_split_main} proves \eqref{eq:EDR_bound_main}. The simplified bound follows
from \(\Delta=\min\{\Delta_{\mathrm P},\Delta_{\mathrm N}\}\). For the exact recovery probability, note that \(D(R)\) is nonnegative and integer-valued,
so Markov's inequality gives
\[
\Pr\bigl(D(R)\ge 1\bigr)\le \mathbb E[D(R)].
\]
Using \eqref{eq:EDR_bound_main},
\[
\Pr\bigl(D(R)\ge 1\bigr)\le |\mathcal N|e^{-2R\Delta^2}.
\]
Since \(D(R)=0\) is equivalent to \(\widehat{\mathcal X}(R)=\mathcal X\), this proves
\eqref{eq:exact_recovery_prob_main}. Finally, if \(R\) satisfies \eqref{eq:R0_recovery_main}, then $|\mathcal N|e^{-2R\Delta^2}\le \rho,$ and \eqref{eq:one_minus_rho_recovery} follows directly from
\eqref{eq:exact_recovery_prob_main}.
\end{proof}

% \begin{remark}[Interpretation]
% The theorem is the main recovery result for Phase~I. It shows that the one-window vote
% gap established in Lemma~\ref{lem:one_window_separation} is amplified exponentially over
% repeated windows. The key quantity is the margin
% \[
% \Delta=\min\{\pi_{\mathrm P}-\lambda,\lambda-\pi_{\mathrm N}\},
% \]
% which measures how well the macro-threshold \(\lambda\) separates the persistent and
% non-persistent one-window vote expectations. Larger margins lead to faster decay of the
% recovery error as shown in \eqref{eq:EDR_bound_main}, while \eqref{eq:R0_recovery_main} gives an explicit bound on
% the number of windows required to achieve \((1-\rho)\)-exact recovery.
% \end{remark}
\begin{remark}[Interpretation]
The theorem is the main recovery result for Phase~I. It shows that the one-window vote
gap established in Lemma~\ref{lem:one_window_separation} is amplified exponentially over
repeated windows. The key quantity is the margin
\[
\Delta=\min\{\pi_{\mathrm P}-\lambda,\lambda-\pi_{\mathrm N}\},
\]
which measures how well the macro-threshold \(\lambda\) separates the persistent and
non-persistent one-window vote expectations. Larger margins lead to faster decay of the
recovery error, as shown in \eqref{eq:EDR_bound_main}, while \eqref{eq:R0_recovery_main}
gives an explicit bound on the number of windows required to achieve \((1-\rho)\)-exact
recovery. 

The lower bound in \eqref{eq:exact_recovery_prob_main} is valid for every
\(R\ge 1\), but it becomes informative once $|\mathcal N|e^{-2R\Delta^2}\le 1,$ or equivalently for large enough $R$ values. For smaller values of \(R\), the right-hand side of
\eqref{eq:exact_recovery_prob_main} may be negative and is therefore vacuous, although
the bound itself remains mathematically valid.
\end{remark}

\section{Phase II: Dynamic Mean Tracking After Recovery}
\label{sec:tracking}

We now analyze mean tracking after the persistent-set estimate from Phase~I has stabilized.
Let $\widehat{\mathcal X}:=\widehat{\mathcal X}(R_0)$ and $T_0:=R_0W,$ where \(R_0\) is chosen from the Phase-I recovery guarantee. For a prescribed tolerance
\(\rho\in(0,1)\), Phase~I ensures that
\begin{equation}
\label{eq:phase2_recovery_event}
\Pr(\mathcal G_{R_0})\ge 1-\rho,
\qquad
\mathcal G_{R_0}:=\{D(R_0)=0\}.
\end{equation}
Thus, on \(\mathcal G_{R_0}\), the server uses the correct persistent set throughout
Phase~II.

After time \(T_0\), the server forms the eligible set $\mathcal E(t):=\widehat{\mathcal X}\cap\mathcal A_t.$
Since the active persistent mean is only defined when \(\mathcal X\cap\mathcal A_t\neq\emptyset\),
we analyze the \emph{informative update times} after \(T_0\), i.e., those times for which
\(|\mathcal E(t)|>0\). Relabel these informative times by \(t=T_0,T_0+1,\dots\). On such
steps, the target is the active persistent mean $x_a^\star(t)$ from \eqref{eq:intro_active_persistent_mean}
% \begin{equation}
% \label{eq:xastar_phase2}
% x_a^\star(t):=
% \frac{1}{|\mathcal X\cap\mathcal A_t|}
% \sum_{\ell\in\mathcal X\cap\mathcal A_t}x_\ell(t),
% \end{equation}
and the server update is
\begin{equation}
\label{eq:xhat_phase2}
\hat x_{t+1}=(1-\eta)\hat x_t+\eta \tilde x(t),
\qquad \eta\in(0,1],
\end{equation}
where
\begin{equation}
\label{eq:xtilde_phase2}
\tilde x(t):=
\frac{1}{|\mathcal E(t)|}\sum_{\ell\in\mathcal E(t)}x_\ell(t).
\end{equation}

Let \(\mathcal F_t\) denote the server history up to informative time \(t\).

\begin{assumption}[Bounded drift]
\label{ass:bounded_drift_phase2}
There exists a constant \(\nu\ge 0\) such that, on \(\mathcal G_{R_0}\),
\begin{equation}
\label{eq:bounded_drift_phase2}
\|x_a^\star(t+1)-x_a^\star(t)\|\le \nu,
\qquad t\ge T_0.
\end{equation}
\end{assumption}

\begin{theorem}[Constant-gain tracking under bounded drift]
\label{thm:phase2_constant_drift}
Assume \eqref{eq:phase2_recovery_event} and Assumption~\ref{ass:bounded_drift_phase2}.
Then, on the event \(\mathcal G_{R_0}\), the informative-step tracking error
\[
e_t:=\hat x_t-x_a^\star(t)
\]
satisfies, for every \(t\ge T_0\),
\begin{equation}
\label{eq:phase2_pathwise_bound}
\|e_{t+1}\|^2
\le
(1-\eta)\|e_t\|^2+\frac{\nu^2}{\eta}.
\end{equation}
% Consequently, with $r_t:=\mathbb E\!\left[\|e_t\|^2\mid \mathcal G_{R_0}\right],$ one has
% \begin{equation}
% \label{eq:phase2_recursion_rt}
% r_{t+1}\le (1-\eta)r_t+\frac{\Delta^2}{\eta},
% \end{equation}
% and therefore
% \begin{equation}
% \label{eq:phase2_finite_time_bound}
% r_t
% \le
% (1-\eta)^{\,t-T_0}r_{T_0}
% +
% \frac{\Delta^2}{\eta^2}\Bigl(1-(1-\eta)^{\,t-T_0}\Bigr),
% \qquad t\ge T_0.
% \end{equation}
% In particular,
Consequently,
\begin{equation}
\label{eq:phase2_limsup}
\limsup_{t\to\infty}
\mathbb E\!\left[
\|e_t\|^2
\ \middle|\
\mathcal G_{R_0}
\right]
\le
\frac{\nu^2}{\eta^2}.
\end{equation}
Hence the conditional mean-square tracking error converges to a neighborhood of the
correct active persistent mean \(x_a^\star(t)\), and the neighborhood size is determined
by the target drift \(\nu\) and the gain \(\eta\).
\end{theorem}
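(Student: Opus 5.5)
The plan is to show that, on the recovery event $\mathcal G_{R_0}$, the tracking recursion \eqref{eq:xhat_phase2} is an exact exponential smoother of the target $x_a^\star(t)$, perturbed only by the target's own drift. The pathwise inequality \eqref{eq:phase2_pathwise_bound} then follows from a weighted Young inequality. The mean-square bound \eqref{eq:phase2_limsup} follows by unrolling that recursion and taking conditional expectations.

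\textbf{Step 1 (the filtered mean equals the target on $\mathcal G_{R_0}$).} On $\mathcal G_{R_0}$ we have $D(R_0)=0$, so $\widehat{\mathcal X}=\mathcal X$ by Theorem~\ref{thm:finite_window_recovery}. Hence $\mathcal E(t)=\mathcal X\cap\mathcal A_t$. At informative times this set is nonempty, so \eqref{eq:xtilde_phase2} gives $\tilde x(t)=x_a^\star(t)$ exactly.

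\textbf{Step 2 (error recursion).} Write $d_t:=x_a^\star(t+1)-x_a^\star(t)$; Assumption~\ref{ass:bounded_drift_phase2} gives $\|d_t\|\le\nu$. Subtracting $x_a^\star(t+1)$ from \eqref{eq:xhat_phase2} and using Step 1 gives
\[
e_{t+1}=(1-\eta)e_t-d_t .
\]
For $\eta=1$ this yields $\|e_{t+1}\|^2=\|d_t\|^2\le\nu^2=\nu^2/\eta$ directly. For $\eta\in(0,1)$, apply $\|a+b\|^2\le(1+\epsilon)\|a\|^2+(1+1/\epsilon)\|b\|^2$ with $\epsilon=\eta/(1-\eta)$. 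Then $(1+\epsilon)(1-\eta)^2=1-\eta$ and $1+1/\epsilon=1/\eta$, which gives exactly \eqref{eq:phase2_pathwise_bound}.

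\textbf{Step 3 (unrolling and conditional expectation).} Iterating the pathwise inequality from $T_0$ gives
\[
\|e_t\|^2\le(1-\eta)^{t-T_0}\|e_{T_0}\|^2+\frac{\nu^2}{\eta}\sum_{k\ge0}(1-\eta)^k=(1-\eta)^{t-T_0}\|e_{T_0}\|^2+\frac{\nu^2}{\eta^2}.
\]
Next take $\mathbb E[\,\cdot\mid\mathcal G_{R_0}]$, which is well defined because $\Pr(\mathcal G_{R_0})\ge1-\rho>0$. If $\eta<1$, the first term vanishes as $t\to\infty$, provided $\mathbb E[\|e_{T_0}\|^2\mid\mathcal G_{R_0}]<\infty$. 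If $\eta=1$, the bound $\nu^2$ holds for every $t>T_0$ with no initial-error term. In both cases \eqref{eq:phase2_limsup} follows.

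\textbf{Main obstacle.} The algebra is routine. The delicate points are bookkeeping ones. First, the drift bound must be interpreted along the relabeled informative times, since non-informative steps are skipped. Second, during the Phase~I prefix the iterates $\hat x_t$ are convex combinations of observed values (or of $\hat x_0$), so $e_{T_0}$ has finite conditional second moment. This finiteness requires that the $x_i$ over $[0,T_0]$ be bounded or have finite second moments; I would state it as a mild standing requirement.
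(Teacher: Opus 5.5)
Your proof is correct and follows the paper's argument. You identify $\tilde x(t)=x_a^\star(t)$ on $\mathcal G_{R_0}$ and derive $e_{t+1}=(1-\eta)e_t-d_t$. You then apply the weighted bound with $\epsilon=\eta/(1-\eta)$, which is the same as the paper's Young's-inequality choice $\alpha=\eta$ (the paper's form also covers $\eta=1$ without a separate case), and finally unroll and pass to the limit. Your remarks on interpreting the drift along the relabeled informative times and on needing $\mathbb E[\|e_{T_0}\|^2\mid\mathcal G_{R_0}]<\infty$ make explicit two assumptions the paper uses only tacitly.
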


\begin{proof}
Consider the time step \(t\ge T_0\) and the event \(\mathcal G_{R_0}\). Since
\(D(R_0)=0\) on this event, the recovered set is correct, and therefore $\mathcal E(t)=\mathcal X\cap\mathcal A_t,$ and $\tilde x(t)=x_a^\star(t).$ Hence \eqref{eq:xhat_phase2} becomes
\[
\hat x_{t+1}=(1-\eta)\hat x_t+\eta x_a^\star(t).
\]
Subtracting \(x_a^\star(t+1)\) from both sides gives
\begin{align*}
    e_{t+1}
=
(1-\eta)e_t-\nu,
\qquad
\nu:=x_a^\star(t+1)-x_a^\star(t).
\end{align*}
Taking norms on both sides yields
\[
\|e_{t+1}\|^2
=
(1-\eta)^2\|e_t\|^2+\|\nu\|^2
-2(1-\eta)\langle e_t,\nu\rangle.
\]
Using Young's inequality $2|\langle e_t,\nu\rangle|
\le
\alpha\|e_t\|^2+\frac{1}{\alpha}\|\nu\|^2$ with the choice \(\alpha=\eta\), we obtain
\[
2(1-\eta)|\langle e_t,\nu\rangle|
\le
\eta(1-\eta)\|e_t\|^2+\frac{1-\eta}{\eta}\|\nu\|^2.
\]
Therefore, we have
\begin{align*}
\|e_{t+1}\|^2
&\le(1-\eta)\|e_t\|^2+\frac{\|\nu\|^2}{\eta}.
\end{align*}
Applying Assumption~\ref{ass:bounded_drift_phase2} gives \eqref{eq:phase2_pathwise_bound}. Now define $r_t:=\mathbb E\!\left[\|e_t\|^2\mid \mathcal G_{R_0}\right]$. Taking conditional expectation in \eqref{eq:phase2_pathwise_bound} gives
\[
r_{t+1}\le (1-\eta)r_t+\frac{\nu^2}{\eta},
\]
unrolling which for $t\ge T_0$ gives us
\begin{align*}
    r_t
\le
(1-\eta)^{\,t-T_0}r_{T_0}
+
\frac{\nu^2}{\eta^2}\Bigl(1-(1-\eta)^{\,t-T_0}\Bigr),
\end{align*}
and letting \(t\to\infty\) gives
\eqref{eq:phase2_limsup}.
\end{proof}

% \begin{remark}[On the Young-inequality choice]
% More generally, Young's inequality gives, for any \(\alpha>0\),
% \[
% \|e_{t+1}\|^2
% \le
% \Bigl((1-\eta)^2+\alpha(1-\eta)\Bigr)\|e_t\|^2
% +
% \left(1+\frac{1-\eta}{\alpha}\right)\|\Delta_t\|^2.
% \]
% The choice \(\alpha=\eta\) used above yields the clean recursion
% \[
% \|e_{t+1}\|^2\le (1-\eta)\|e_t\|^2+\frac{\|\Delta_t\|^2}{\eta},
% \]
% which is contractive for every \(\eta\in(0,1]\) and gives the smallest steady-state upper
% bound within this Young-family of estimates.
% \end{remark}

\begin{remark}[Interpretation and scope]
The theorem establishes that, after persistent-set recovery, a simple constant-gain recursion tracks the correct active persistent mean up to a neighborhood whose size depends on the drift bound $\nu$ and becomes smaller as the gain $\eta$ increases. Thus, the remaining Phase-II error is driven by target motion rather than by identification error.

A different regime arises when the drift scales with the step size, e.g.,
\[
\|x_a^\star(t+1)-x_a^\star(t)\|\le \eta_t \nu.
\]
In that case, a diminishing-gain choice such as \(\eta_t=\mathcal O(1/t)\) can yield a
shrinking mean-square error neighborhood. However, that assumption ties the target motion
to the filter gain and is not the focus of the present paper. If the drift is bounded
independently of the gain, as assumed here, then a constant-gain tracker is the more
natural choice. More general dynamic-tracking architectures for faster target motion,
for example using two-time-scale or adaptive designs, are left for future work.
\end{remark}

\begin{figure}[t]
    \centering
    \begin{minipage}{0.48\textwidth}
        \centering
        \includegraphics[width=\linewidth]{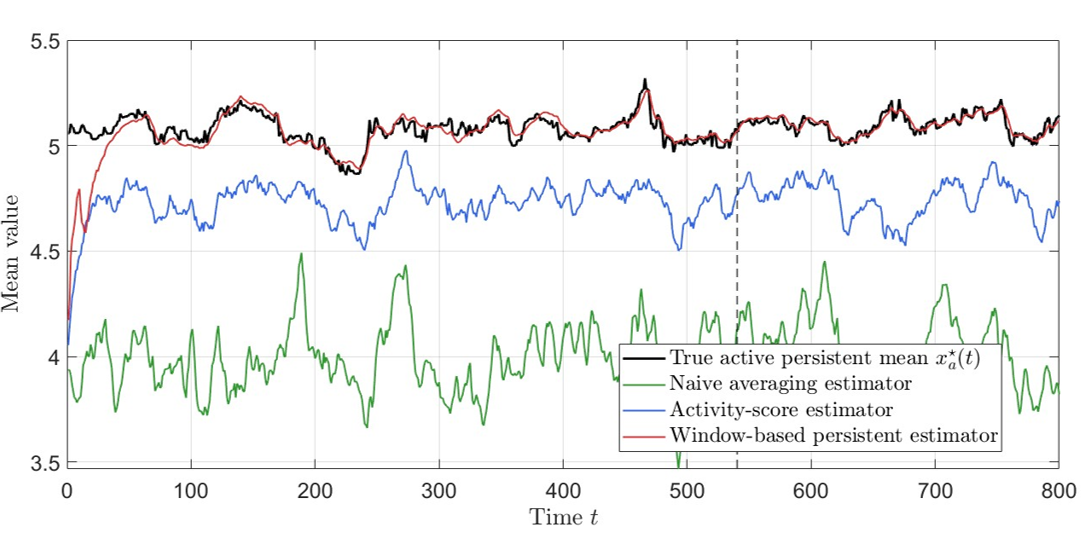}
        \caption{Tracking of the active persistent mean \(x_a^\star(t)\) by the naive averaging estimator, the activity-score estimator, and the proposed window-based persistent estimator.}
        \label{fig:mean_tracking}
    \end{minipage}
    \hfill
    \begin{minipage}{0.49\textwidth}
        \centering
        \includegraphics[width=\linewidth]{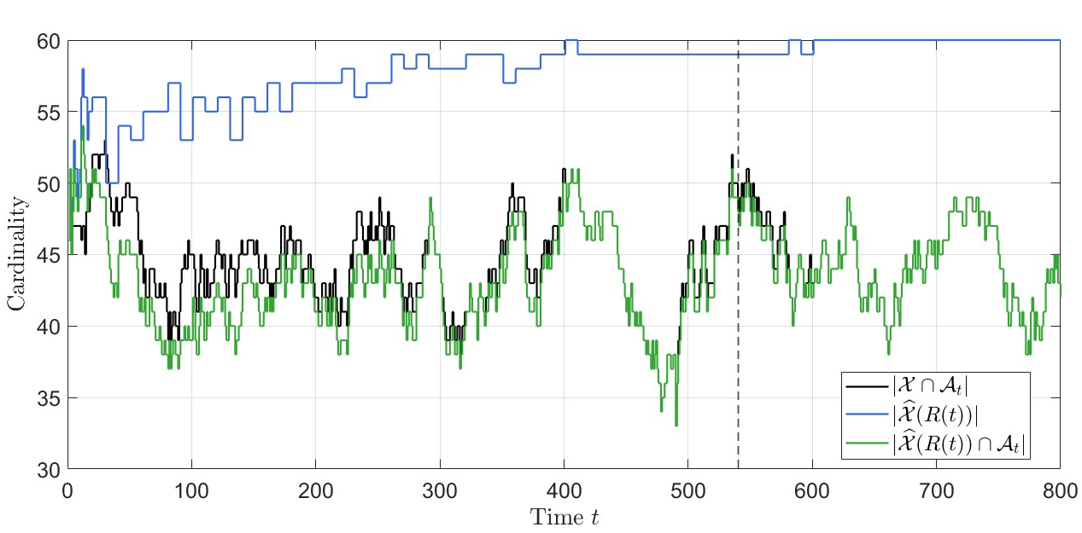}
        \caption{Cardinality of the true active persistent set \( |\mathcal X\cap \mathcal A_t| \), the recovered persistent set \( |\widehat{\mathcal X}(R(t))| \), and the recovered active intersection \( |\widehat{\mathcal X}(R(t))\cap \mathcal A_t| \). The vertical dashed line marks the theoretical recovery horizon \(T_0=540\).}
        \label{fig:set_sizes}
    \end{minipage}
\end{figure}

\section{Numerical Simulation}
We illustrate the two mechanisms predicted by the analysis: finite-window recovery of the persistent set and the resulting improvement in tracking the target mean. We simulate a network with \(N=100\) agents, of which \(n=60\) are persistent and \(m=40\) are non-persistent agents. Persistent agents are generated around \(\mu_{\mathrm P}=5\) with standard deviation \(0.8\), while non-persistent agents are generated around \(\mu_{\mathrm N}=0\) with standard deviation \(0.2\). The simulation horizon is \(T=800\), and all three estimators use the same tracking gain \(\eta=0.05\).

 Persistent agents participate with probability \(p_{\mathrm P}=0.75\), whereas non-persistent agents participate with probability \(p_{\mathrm N}=0.25\). For the proposed estimator, we use window length \(W=20\), vote threshold \(k^\star=10\), corresponding normalized threshold \(\theta=0.5\), and macro-threshold \(\lambda=0.5\). Choosing one-window confidence parameter \(\delta= 0.125\), \eqref{eq:rho_bounds_compact} gives \(\rho_{\mathrm P}\approx 0.73\) and \(\rho_{\mathrm N}\approx 0.27\). Hence \(p_{\mathrm P}\ge \rho_{\mathrm P}\) and \(p_{\mathrm N}\le \rho_{\mathrm N}\), satisfying Lemma~\ref{lem:one_window_separation}. Using Theorem~\ref{thm:finite_window_recovery}, the resulting margins \(\Delta_{\mathrm P}\) and \(\Delta_{\mathrm N}\) yield \(\Delta\approx 0.375\). For confidence level \(1-\rho=0.95\), \eqref{eq:R0_recovery_main} gives
\begin{equation}
    R_0
    =
    \left\lceil
    \frac{1}{2(0.375)^2}
    \log\!\left(\frac{100}{0.05}\right)
    \right\rceil
    =
    27
    \ \text{windows},
\end{equation}
which corresponds to the theoretical recovery horizon \(T_0=R_0W=540\) time steps.

% Fig.~\ref{fig:set_sizes} shows the structural effect of this recovery process. Early in the simulation, the recovered set is still adapting. As window votes accumulate, however, the recovered persistent set \( \widehat{\mathcal X}(R(t)) \) stabilizes near the true persistent cardinality \(n=60\), and its active intersection \( \widehat{\mathcal X}(R(t))\cap \mathcal A_t \) becomes closely aligned with the true active persistent set \( \mathcal X\cap \mathcal A_t \). This transition occurs around the predicted horizon \(T_0=540\), shown by the vertical dashed line.

% Fig.~\ref{fig:mean_tracking} shows the corresponding effect on the estimation layer. Once persistent-set recovery has stabilized, the proposed window-based persistent estimator tracks \(x_a^\star(t)\) substantially more accurately than the naive averaging estimator, which remains biased by contamination from the non-persistent population. The activity-score estimator provides a partial correction, but remains less accurate than the window-based method. Overall, the simulation is consistent with the finite-window theory: recovery of the persistent set around the predicted horizon is accompanied by a clear improvement in tracking accuracy.
Fig.~\ref{fig:set_sizes} shows the structural effect of the recovery process. As window votes accumulate, the recovered persistent set \( \widehat{\mathcal X}(R(t)) \) stabilizes near the true persistent cardinality \(n=60\), and its active intersection \( \widehat{\mathcal X}(R(t))\cap \mathcal A_t \) closely tracks the true active persistent set \( \mathcal X\cap \mathcal A_t \). This transition occurs around the predicted horizon \(T_0=540\), marked by the vertical dashed line.

Fig.~\ref{fig:mean_tracking} shows the resulting effect on estimation. After persistent-set recovery stabilizes, the proposed window-based persistent estimator tracks \(x_a^\star(t)\) significantly more accurately than the naive averaging estimator, while the activity-score estimator remains intermediate between the two. These results are consistent with the finite-window theory: recovery of the persistent set near the predicted horizon leads to a clear improvement in tracking accuracy.

\section{Conclusion}

This paper studied mean tracking in an open client--server network with an unknown persistent subset of agents. We showed that naive aggregation over the active agents generally fails to recover the task-relevant mean because non-persistent agents contaminate the active set. To address this, we proposed a two-stage framework: a finite-window voting scheme to recover the persistent set, followed by a server-side recursion to track the active persistent mean. The analysis established explicit high-probability guarantees for persistent-set recovery and a bounded-drift tracking result for the post-recovery phase. Together, these results show that a server can progressively identify a hidden stable core from repeated activity observations and then track its associated mean with controlled error.
\bibliographystyle{IEEEtran}
\bibliography{IEEEfull,my_bib}

\end{document}